\theoremstyle{plain}
\newtheorem{te}{Theorem}[section]
\newtheorem{lem}[te]{Lemma}
\newtheorem{co}[te]{Corollary}
\newtheorem{pr}[te]{Proposition}
\newtheorem{de}[te]{Definition}
\newtheorem{qu}[te]{Question}
\theoremstyle{remark}
\newtheorem*{ack*}{Acknowledgment}
\def\0{{\bf 0}}
\def\R{{\mathbb R}}
\def\N{{\mathbb N}}
\def\S{{\mathbb S}}
\def\Z{{\mathbb Z}}
\def\P{{\mathbb P}}
\def\supp{{\operatorname{supp}\,}}
\def\nint{\mathop{\diagup\kern-13.0pt\int}}
\def\dist{{\operatorname{dist}\,}}
\def\Ic{{\mathcal I}}
\def\Cc{{\mathcal C}}
\def\Tc{{\mathcal T}}
\def\Pc{{\mathcal P}}
\def\Sc{{\mathcal S}}
\def\Lc{{\mathcal L}}
\begin{document}

\title[On the $N$-set occupancy problem ]{On the $N$-set occupancy problem}

\author{Ciprian Demeter}
\address{Department of Mathematics, Indiana University, 831 East 3rd St., Bloomington IN 47405}
\email{demeterc@iu.edu}
\author{Ruixiang Zhang}
\address{Department of Mathematics, UC Berkeley}
\email{ruixiang@berkeley.edu}

\thanks{The first author is  partially supported by the NSF grants  DMS-2055156  and  DMS-2349828. The second  author is supported in part by
	NSF DMS-2143989 and the Sloan Research Fellowship.}

\begin{abstract}
We explore variants of the following open question: Split $[0,1]^2$ into $N^2$ squares with side length $1/N$. Is there a way to select $N$ of these squares such that each line intersects only $O(1)$ of them?

\end{abstract}

\maketitle

\section{introduction}

We begin with an abstract formulation of the problem we will investigate. For an integer $M$ we will write $[M]$ for a set with $M$ elements. The nature of the set will be irrelevant. For $N\le M$, an $N$-set $S$ will be any subset of $[M]$ with $N$ elements, which we write as $|S|=N$.

The value of interest for us will be $M=N^2$.
\begin{de}
\label{de1}	
Given a collection $\Sc$ of $N$-sets in $[N^2]$, we will write
$$C(\Sc)=\min_{S':\;N-set}\max_{S\in \Sc}|S\cap S'|.$$
The minimum is taken over all $N$-sets $S'$ in $[N^2]$.
\end{de}
\smallskip

If $\Sc$ consists of all $N$-sets, then $C(\Sc)=N$. If $|\Sc|\le N-1$ then $C(\Sc)=0$. It is less obvious but not very difficult (see Proposition \ref{prO}) to show that $|\Sc|=O(N)$ implies $C(\Sc)=O(1)$. There are very large collections $\Sc_N$ with $C(\Sc_N)=O(1)$. For example $\Sc_N$ may consist of all the $N$-sets that are disjoint from a fixed $N$-set in $[N^2]$. In this case $|\Sc_N|={{N^2-N}\choose N}$. Another example that is very uniform (each element in $[N^2]$ is contained in roughly $|\Sc_N|/N$ of the sets in $\Sc_N$) consists of all tubes in $\Tc_N$ (see below) whose angle with a horizontal line is at least (say) $\pi/4$.
\medskip
\smallskip

Our motivating example is as follows. Partition $[0,1]^2$ into $N^2$  squares with side length $1/N$. We identify this collection with $[N^2]$, and we let $\Sc$ consist of all collections of squares in $[N^2]$ that intersect a given line segment. We denote this collection by $\Tc_N$, and will refer to each $T\in\Tc_N$ as a ``tube". With this definition, a tube may in fact contain as many as $2N$ squares. We will ignore this issue, as such multiplicative constants are largely irrelevant. Note that there are $\sim N^2$ tubes.
\smallskip

The following elegant question is deceivingly easy-looking, but open.

\begin{qu}
\label{14}	
Is it true that $C(\Tc_N)$ is $O(1)$ as $N\to \infty$? If not, what is its asymptotic growth as $N\to\infty$?	
\end{qu}

The paper \cite{Ca} contains a few results about this question and some of its variants,  and explains their relevance  in Euclidean harmonic analysis. The reader is also encouraged to check the more recent paper \cite{CIW} for applications.

One can ask a similar question when $[0,1]^2$ is replaced with the unit sphere $\S^2$, and tubes consist of unions of $\sim N$ square-like tiles of diameter $1/N$ intersecting great circles. The paper \cite{Li} describes a connection between this problem and the existence of bases of uniformly bounded spherical harmonics on $\S^2$. This connection has been observed earlier by Jean Bourgain, cf. \cite{Va}, who popularized Question \ref{14} as an attractive toy model.
The two geometries -flat vs positively curved- are different, so the questions might in principle have different answers in the two contexts.

In the next section we present essentially sharp examples for families of arbitrary $N$-sets. In Section \ref{sec4} we introduce a strategy aimed  to disprove the $O(1)$ bound for tubes, and an interesting construction that nullifies this attempt. In Section \ref{sec5} we show that no AD-regular N-set is an extremizer for $C(\Tc_N)$.

\begin{ack*}
The  authors would like to thank P\'eter Varj\'u for motivating conversations at the inception of this project.	 They are also indebted to Tony Carbery  for sharing his perspective on the topic of this paper.
\end{ack*}	

\section{Upper bounds for arbitrary collections}
\label{s2}

The  upper bound  $C(\Tc_N)=O(\frac{\log N}{\log\log N})$ is an easy application of the probabilistic method.
It is the case $d=2$, $k\sim \frac{\log N}{\log\log N}$ in Theorem 2 of \cite{Ca}.

Our abstract formulation emphasizes the fact that this method does not use anything special about the fine structure of tubes, other than the polynomial upper bound on their cardinality.
\begin{pr}
\label{18}	
Let $s>0$.	
Consider any collection $\Sc_N$ of $N$-sets $S$ in $[N^2]$, with cardinality  $|\Sc_N|\le N^{1+s}$. Then for each $\beta>1+s$ and each $N$ large enough  we have
$$C(\Sc_N)\le \frac{\beta\log N}{\log \log N}.$$
\end{pr}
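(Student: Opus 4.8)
The plan is a direct application of the probabilistic method, and the abstract formulation makes clear that no feature of tubes is used. Fix $\beta>1+s$, set $k=\big\lfloor \frac{\beta\log N}{\log\log N}\big\rfloor$, and let $S'$ be a uniformly random $N$-subset of $[N^2]$; working with a genuine random $N$-set (rather than retaining each of the $N^2$ elements independently with probability $1/N$) has the advantage that $S'$ is automatically an admissible competitor in Definition \ref{de1}, so no step is needed to repair its cardinality. For a fixed $S\in\Sc_N$, the count $|S\cap S'|$ is then hypergeometric: it is the number of the $N$ points of $S'$ that land in the fixed $N$-element set $S$. I would first record the tail bound $\P\big(|S\cap S'|\ge k+1\big)\le \frac{1}{(k+1)!}$, obtained by a union bound over the $\binom{N}{k+1}$ possible $(k+1)$-subsets of $S$: for fixed distinct $i_1,\dots,i_{k+1}\in S$,
\[\P(i_1,\dots,i_{k+1}\in S')=\prod_{j=0}^{k}\frac{N-j}{N^2-j}\le N^{-(k+1)},\]
so, since $\binom{N}{k+1}N^{-(k+1)}\le \frac{1}{(k+1)!}$, a further union bound over $\Sc_N$ yields
\[\P\Big(\max_{S\in\Sc_N}|S\cap S'|\ge k+1\Big)\le \frac{|\Sc_N|}{(k+1)!}\le \frac{N^{1+s}}{(k+1)!}.\]

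The heart of the matter, and the step I expect to demand the most care, is showing that the right-hand side is $<1$ for all large $N$, equivalently $(k+1)!>N^{1+s}$; this is exactly where the slack $\beta-(1+s)>0$ is used. Writing $k_0:=\frac{\beta\log N}{\log\log N}$, so $k+1\ge k_0\to\infty$, monotonicity of $x\mapsto x\log x-x$ on $[1,\infty)$ together with Stirling's inequality $(k+1)!\ge((k+1)/e)^{k+1}$ gives $\log(k+1)!\ge k_0\log k_0-k_0$. Since $\log k_0=\log\log N-\log\log\log N+\log\beta$ and $\frac{\log\log\log N}{\log\log N}\to 0$, one has $k_0\log k_0=\beta\log N\,(1+o(1))$, while $k_0=o(\log N)$; hence $\log(k+1)!\ge(\beta-o(1))\log N$, which exceeds $(1+s)\log N$ once $N\ge N_0(\beta,s)$. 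The only real obstacle is this bookkeeping: one must verify that the $o(1)$ corrections coming from Stirling and from replacing $k+1$ by $k_0$ do not consume the positive gap $\beta-(1+s)$ — and they do not, being $o(1)$ multiples of $\log N$.

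For such $N$ the displayed probability is $<1$, so there is an $N$-set $S'$ with $|S\cap S'|\le k$ for every $S\in\Sc_N$, whence $C(\Sc_N)\le k\le\frac{\beta\log N}{\log\log N}$. As a remark, an equivalent route keeps each point independently with probability $c/N$ for a fixed $c>1$, uses a Chernoff bound to guarantee $|S'|\ge N$ with overwhelming probability, and deletes the surplus points (deletion only decreasing intersections); this replaces $\frac{1}{(k+1)!}$ by $\frac{c^{k+1}}{(k+1)!}$, and since $c^{k+1}=N^{o(1)}$ the same conclusion follows.
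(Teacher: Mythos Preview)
Your argument is correct and follows essentially the same route as the paper: pick $S'$ uniformly at random among $N$-subsets of $[N^2]$, bound the hypergeometric tail $\P(|S\cap S'|\ge k+1)$ by $1/(k+1)!$ (the paper does this by summing the exact point probabilities $\P(|S\cap S'|=l)<1/l!$, you do it by a union bound over $(k{+}1)$-subsets of $S$), then use Stirling to see that $(k+1)!$ beats $N^{1+s}$ when $k\sim\frac{\beta\log N}{\log\log N}$ with $\beta>1+s$. The only differences are cosmetic.
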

\begin{proof}
We use Stirling's formula
$$k!\sim \frac{k^{k+\frac12}}{e^k}$$ with $k=\frac{\beta\log N}{\log \log N}$ to find
$$k!\gtrsim (\frac{\beta}{e\log\log N})^{k}(\log N)^{k}.$$
Then note that for each $\epsilon>0$
$$(C\log\log N)^k\lesssim_\epsilon N^{\epsilon}$$
and also that
$$(\log N)^k=N^{\beta}.$$
Thus $k!\gtrsim_\epsilon N^{\beta-\epsilon}$.

There are $N^2\choose N$ many $N$-sets. Select an $N$-set $S'$ at random, with equal probability. For each $S\in\Sc_N$ and each $0\le k<  N$, we compute the probability
$$\P(|S\cap S'|=k)=\frac{{N\choose k}\times {N^2-N\choose N-k}}{{N^2\choose N}}=\frac{1}{k!}\prod_{i=1}^{k-1}\frac{(N-i)^2}{N^2-i}\prod_{j=0}^{N-1-k}\frac{N^2-N-j}{N^2-k-j}<\frac1{k!}.$$
It is in fact easy to see that
$$\frac{{N\choose k}\times {N^2-N\choose N-k}}{{N^2\choose N}}\sim \frac1{k!},$$
showing no loss in this computation.
Thus
\begin{equation}
\label{23}
\P(|S\cap S'|> k,\;\text{for some } S\in\Sc_N)\le \sum_{S\in\Sc_N}\sum_{l\ge k+1}\P(|S\cap S'|=l)\lesssim\sum_{l\ge k+1}\frac{|\Sc_N|}{l!}\lesssim\frac{N^{1+s}}{k!}.\end{equation}
Using $k=\frac{\beta\log N}{\log\log N}$ we conclude that for large enough $N$
$$\P(|S\cap S'|\le \frac{\beta\log N}{\log\log N},\;\forall S\in\Sc_N)\sim 1.$$
\end{proof}
A slight modification of this argument  shows that if $|\Sc_N|\le N^{O(1)}$ then for each $\epsilon>0$ there is some $N^{1-\epsilon}$-set $S'_{N,\epsilon}$ in $[N^2]$  such that
$$\max_{S\in \Sc_N}|S'_{N,\epsilon}\cap S|=O_\epsilon(1).$$

Since there are $O(N^2)$ tubes, the upper bound $C(\Tc_N)=O(\frac{\log N}{\log\log N})$ is immediate. To the best of our knowledge, there is no improvement of this estimate in the literature.

We next prove that the probabilistic method is essentially  sharp, if only the cardinality of $\Sc_N$ is taken into account. We start with a definition.
\begin{de}
Given $l< k<m$ we let $M(n,k,l)$ be the minimum number of $k$-sets in $[n]$ needed to cover all $l$-sets in $[n]$. By that we mean that each $l$-set needs to be a subset of one of the selected $k$-sets.
\end{de}
Since there are $n\choose l$ many $l$-sets, and since each $k$-set covers exactly $k\choose l$ of them, it follows that $M(n,k,l)\ge {n\choose l}/{k\choose l}$. This lower bound is very close to the best known upper bound.
\begin{te}[\cite{ES}]
\label{17}	
We have
$$M(n,k,l)\lesssim\frac{{n\choose l}}{{k\choose l}}\log{k\choose l}.$$
\end{te}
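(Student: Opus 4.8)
The plan is to run the standard greedy covering argument, but split into two phases so that the logarithmic loss comes out as $\log\binom{k}{l}$ rather than the cruder $\log\binom{n}{l}$ that a one-phase argument would give. Call an $l$-set \emph{live} if it is not yet contained in any $k$-set chosen so far. The elementary point driving everything is an averaging estimate: if there are currently $L$ live $l$-sets, then summing the quantity $\#\{\text{live }l\text{-sets}\subseteq K\}$ over all $\binom{n}{k}$ choices of a $k$-set $K$ and reversing the order of summation gives $L\binom{n-l}{k-l}$; dividing by $\binom{n}{k}$ and using the identity $\binom{n}{k}\binom{k}{l}=\binom{n}{l}\binom{n-l}{k-l}$, the \emph{average} number of live $l$-sets captured by a single $k$-set is $L\binom{k}{l}/\binom{n}{l}$. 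Hence at every stage there is a $k$-set containing at least a $\binom{k}{l}/\binom{n}{l}$ fraction of the live $l$-sets.

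Choosing such a $k$-set greedily at each step, the number of live $l$-sets after $t$ steps is at most $\binom{n}{l}\bigl(1-\binom{k}{l}/\binom{n}{l}\bigr)^{t}\le\binom{n}{l}\exp\bigl(-t\binom{k}{l}/\binom{n}{l}\bigr)$. In Phase 1 I take $t=\bigl\lceil\frac{\binom{n}{l}}{\binom{k}{l}}\log\binom{k}{l}\bigr\rceil$, which leaves at most $\binom{n}{l}/\binom{k}{l}$ live $l$-sets. In Phase 2 I simply extend each remaining live $l$-set to a $k$-set arbitrarily and add it to the cover, using at most $\binom{n}{l}/\binom{k}{l}$ further $k$-sets; now every $l$-set is covered. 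Altogether the cover has size $\lesssim\frac{\binom{n}{l}}{\binom{k}{l}}\log\binom{k}{l}+\frac{\binom{n}{l}}{\binom{k}{l}}\lesssim\frac{\binom{n}{l}}{\binom{k}{l}}\log\binom{k}{l}$, where in the last step I used $\binom{k}{l}\ge k\ge 2$ (so $\log\binom{k}{l}\gtrsim 1$) to absorb the Phase 2 contribution.

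There is no deep obstacle here; it is a classical greedy/first-moment argument. The only things to be careful about are (i) re-applying the ``beat the average'' observation to the \emph{current} live family after each greedy deletion, and (ii) choosing the thresholds correctly — reduce to $\binom{n}{l}/\binom{k}{l}$ live sets, not all the way down to $1$ — since it is precisely the two-phase split that turns $\log\binom{n}{l}$ into $\log\binom{k}{l}$. If one prefers to avoid greedy language, Phase 1 can instead be done probabilistically: picking $t$ uniformly random $k$-sets independently, the expected number of live $l$-sets is $\binom{n}{l}\bigl(1-\binom{k}{l}/\binom{n}{l}\bigr)^{t}$, and a first-moment argument yields a single choice attaining at most this expectation.
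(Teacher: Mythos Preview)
Your proof is correct and is essentially the same as the paper's. The paper's argument is exactly your ``probabilistic alternative'' from the last paragraph: choose $x\sim\frac{\binom{n}{l}}{\binom{k}{l}}\log\binom{k}{l}$ random $k$-sets, use averaging to find a choice leaving at most $\binom{n}{l}\bigl(1-\binom{k}{l}/\binom{n}{l}\bigr)^{x}$ uncovered $l$-sets, then cover those one at a time; your greedy Phase~1 is the standard derandomization of this step, and the two-phase split with the same threshold $\binom{n}{l}/\binom{k}{l}$ and the same choice of $x$ is identical in both.
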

\begin{proof}
The proof in \cite{ES} is probabilistic. We rewrite it using a double counting argument, to make it more transparent.

Let $x$ be a fixed integer to be chosen at the end of the argument. For (not necessarily distinct) $k$-sets $S_1,S_2,\ldots,S_x$ we write
$$\Lc(S_1,\ldots,S_x)=\{L: l-set\text{ not covered by any }S_i\}.$$
Note that
\begin{align*}
\sum_{S_1,\ldots,S_x:\;k-sets}|\Lc(S_1,\ldots,S_x)|&=\sum_{L:\;l-set}|\{S:\;k-set,\;L\not\subset S\}|^x\\&= {n\choose l}\left[{n\choose k}-{{n-l}\choose{k-l}}\right]^x={n\choose l}{n\choose k}^x\left[1-\frac{{k\choose l}}{{n\choose l}}\right]^x.
\end{align*}
It follows that there are $S_1,\ldots,S_x$ with
$$|\Lc(S_1,\ldots,S_x)|\le {n\choose l}\left[1-\frac{{k\choose l}}{{n\choose l}}\right]^x.$$
For each $L\in \Lc(S_1,\ldots,S_x)$, pick some $k$-set $S(L)$ covering it. It follows that the collection consisting of $S_1,\ldots,S_x$ together with $\{S(L):\;L\in \Lc(S_1,\ldots,S_x) \}$ cover all $l$-sets. Thus
$$M(m,k,l)\le {n\choose l}\left[1-\frac{{k\choose l}}{{n\choose l}}\right]^x+x.$$
Picking $x\sim \frac{{n\choose l}}{{k\choose l}}\log{k\choose l}$ finishes the argument.

\end{proof}
Armed with this result we can produce small and rather uniform collections of $N$-sets with large intersective constants.
\begin{te}
\label{19}	
Given $m\lesssim  N^{1/3}$, there is a collection $\Sc_m$ of $N$-sets in $[N^2] $ with cardinality $$m^{m-1}N\lesssim |\Sc_m|\lesssim m^{m}N\log N$$ such that
$$C(\Sc_m)\ge m.$$
Moreover, each $n\in[N^2]$ belongs to at least $\sim m^{m-1}$ sets in $\Sc_m$.
\end{te}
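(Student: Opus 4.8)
The plan is to build $\Sc_m$ from the covering sets produced by Theorem \ref{17}, applied with parameters chosen so that the ``host universe'' $[n]$ has size about $N$ and the $k$-sets have size a bit below $N$. Concretely, I would split $[N^2]$ into $N$ disjoint blocks $B_1,\dots,B_N$, each of size $N$, and on each block run the Erd\H{o}s--Spencer covering construction to get a family of $k$-sets covering all $l$-sets inside that block. The idea is that if $S'$ is any $N$-set, then by pigeonhole it must contain at least $l$ points of some block $B_j$ (for a suitable small $l$ relative to the average $|S'\cap B_j|=1$ this needs care — see below), and on that block there is a covering $k$-set $S$ which then satisfies $|S\cap S'|\ge l$. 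Taking $m=l$ (or comparable) gives $C(\Sc_m)\ge m$. Each set in $\Sc_m$ would be padded to size exactly $N$ by throwing in arbitrary extra points from the other blocks if needed, which only helps the lower bound on $|S\cap S'|$.

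The parameters I expect to use: on a block of size $n\sim N$, take $k\sim N/m$ and $l\sim m$, or more likely $k$ slightly smaller than $N$ and $l$ a constant times $m$. By Theorem \ref{17} the number of $k$-sets needed per block is $\lesssim \frac{\binom{n}{l}}{\binom{k}{l}}\log\binom{k}{l} \sim (n/k)^l \cdot l\log(k)$, and with $n$ blocks this gives the claimed total cardinality of order $m^{m}N\log N$ after optimizing, with the matching lower bound $m^{m-1}N$ coming from the combinatorial lower bound $M(n,k,l)\ge \binom{n}{l}/\binom{k}{l}$ per block times $N$ blocks. The constraint $m\lesssim N^{1/3}$ should be exactly what is needed to guarantee $k<n\le N$ and that the $k$-sets (of size $\sim N/m$ or so, living in disjoint blocks) genuinely fit inside $[N^2]$ while the arithmetic $\binom{n}{l}/\binom{k}{l}\sim (n/k)^l$ is valid (i.e. $l$ small compared to $k$). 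The uniformity claim — each $n\in[N^2]$ lies in $\gtrsim m^{m-1}$ sets — follows because within each block the covering family is (on average, and can be arranged to be) roughly uniform: each point of $B_j$ is in about $\frac{1}{n}\cdot k \cdot M(n,k,l) \sim m^{m-1}$ of the $k$-sets, since each $k$-set has $k$ points and there are $\sim m^{m-1}$ of them covering $n\sim N$ points with multiplicity $k$.

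The main obstacle is the pigeonhole step: the average intersection $|S'\cap B_j|$ over blocks is exactly $1$, not $m$, so a generic $N$-set spreads evenly and meets no block in $m$ points. The fix is to not use $N$ blocks of size $N$ but rather fewer, larger blocks, or equivalently to run the covering on all of $[N^2]$ at once with $k\sim N$ and $l\sim m$: then $M(N^2,N,m)\gtrsim \binom{N^2}{m}/\binom{N}{m}\sim N^m$, which is far too large. So instead the right move is the intermediate one — use $N/m$ blocks each of size $mN$ (say), so the average of $|S'\cap B_j|$ is $m$, and then a pigeonhole/convexity argument forces some block to be hit in $\gtrsim m$ points. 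This rebalancing, together with checking that $M(mN, k, m)$ with $k$ chosen appropriately still multiplies out to the stated cardinality range and that $k<mN$ forces $m\lesssim N^{1/3}$, is the delicate part; everything else (padding to size $N$, the uniformity count, invoking Theorem \ref{17}) is routine bookkeeping.
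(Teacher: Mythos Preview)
Your final plan---split $[N^2]$ into $N/m$ blocks of size $mN$, apply Theorem~\ref{17} on each block to cover all $m$-subsets by $N$-sets, and use pigeonhole on any $N$-set $S'$---is exactly the paper's construction. Two points are not yet right, though.

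First, with blocks of size $mN$ the correct choice is simply $k=N$ (so the covering sets are already $N$-sets and no padding is needed), and the constraint $m\lesssim N^{1/3}$ does \emph{not} come from ``$k<mN$'' or from ``$l$ small compared to $k$''. It comes from the ratio estimate
\[
\frac{\binom{Nm}{m}}{\binom{N}{m}}=\prod_{i=0}^{m-1}\frac{Nm-i}{N-i}\le m^m\Bigl(1+\frac{2m^2}{N}\Bigr)^{m},
\]
which is $\lesssim m^m$ precisely when $m^3\lesssim N$. Without this you only get $|\Sc_m|\lesssim (Cm)^m N\log N$ for some $C>1$, which is not the stated bound.

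Second, your uniformity argument is only an average count: knowing that the $\sim m^m$ covering sets in a block each have $N$ points spread over $mN$ elements gives average multiplicity $\sim m^{m-1}$, but does not by itself give a \emph{pointwise} lower bound, and ``can be arranged to be uniform'' is not obvious while preserving the cover property and the cardinality bound. The paper's argument is direct and automatic: for a fixed $n$ in block $S_i$, the sets $S\in\Cc_i$ containing $n$ must, after deleting $n$, cover every $(m-1)$-subset of $S_i\setminus\{n\}$ by $(N-1)$-sets; hence there are at least $M(Nm-1,N-1,m-1)\ge\binom{Nm-1}{m-1}/\binom{N-1}{m-1}\sim m^{m-1}$ of them. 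This is the missing idea for the last clause of the theorem.
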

\begin{proof}
We first split $[N^2]$ into $N/m$ sets $S_i$, each having cardinality $Nm$. Using Theorem \ref{17}, the $m$-sets in each $S_i$ can be covered with a collection  $\Cc_i$ consisting of
$$\frac{{Nm\choose m}}{{N\choose m}}\le M(Nm,N,m)\lesssim \frac{{Nm\choose m}}{{N\choose m}}\log{N\choose m}$$
$N$-sets. Let $\Sc_m=\cup_{i=1}^{N/m}\Cc_i$. Next we estimate
\begin{align*}
m^m\le \frac{{Nm\choose m}}{{N\choose m}}=\frac{Nm}{N}\frac{Nm-1}{N-1}\ldots\frac{Nm-m+1}{N-m+1}&\le (\frac{Nm-m}{N-m})^m\\&=m^m(1+\frac{m^2-m}{N-m})^m\\&\le m^m(1+\frac{2m^2}{N})^{\frac{N}{2m^2}\frac{2m^3}{N}}\lesssim m^{m}.
\end{align*}
Note also that  ${N\choose m}\le  N^m$. The desired  bounds on $|\Sc_m|$ follow.

To see that $C(\Sc_m)\ge m$, let $S'$ be an arbitrary $N$-set in $[N^2]$. Then $S'$ must contain an $m$-set in some $S_i$. This is covered by some $S\in\Cc_i$, showing that $|S'\cap S|\ge m$.

Finally, fix an element $n$ of $[N^2]$, say in $S_i$. Since the $N$-sets $S$ in $\Cc_i$ containing $n$ cover all $m$-sets in $S_i$ containing $n$, the $(N-1)$-sets $S\setminus \{n\}$ (with $S$ containing $n$) must cover all $(m-1)$-sets in $S_i\setminus \{n\} $. It follows that there are at least
$$M(Nm-1,N-1,m-1)\ge \frac{{Nm-1\choose m-1}}{{N-1\choose m-1}}\sim m^{m-1}$$
sets $S\in \Sc_m$ containing $n$. This is sharp, up to a factor of $m\log N$. Indeed, double counting shows that the average value over $n\in[N^2]$ for the number of $S\in \Sc_m$ containing $n$ is $O(m^m\log N)$. This shows that the collection $\Sc_m$ is rather uniform.

\end{proof}
Let us test the result from the previous theorem with $m=\frac{s\log N}{\log\log N}$, for some fixed $s>0$. We find that
$${N\log N} m^m=N^{1+s}\log N(\frac{s}{\log\log N})^{\frac{s\log N}{\log\log N}}\ll N^{1+s}.$$
The following corollary should be compared with Proposition \ref{18}.
\begin{co}
For each $s$ and each $N$ large enough, there is a collection $\Sc_N$ of $N$-sets in $[N^2]$ such that $|\Sc_N|\ll N^{1+s}$ and $C(\Sc_N)\ge \frac{s\log N}{\log\log N}$.
\end{co}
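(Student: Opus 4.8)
The plan is to deduce the corollary by specializing Theorem~\ref{19}. Fix $s>0$ and set $m:=\lfloor \frac{s\log N}{\log\log N}\rfloor$. First I would check that $m$ is an admissible parameter: since $m$ grows only like $\log N/\log\log N$, we have $m\lesssim N^{1/3}$ for all $N$ larger than a threshold depending on $s$, so Theorem~\ref{19} applies and yields a collection $\Sc_N:=\Sc_m$ of $N$-sets in $[N^2]$ with $C(\Sc_N)\ge m\ge \frac{s\log N}{\log\log N}-1$ and $|\Sc_N|\lesssim m^m N\log N$. Absorbing the additive $1$ (which is negligible next to $\log N/\log\log N$), or equivalently running the argument with a slightly larger fixed exponent, gives the asserted lower bound $C(\Sc_N)\ge \frac{s\log N}{\log\log N}$.

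It then remains to verify the cardinality bound $|\Sc_N|\ll N^{1+s}$, for which it suffices to show $m^m\log N\ll N^s$. This is exactly the computation already flagged in the paragraph preceding the statement. Writing $m^m=\big(\tfrac{s}{\log\log N}\big)^m(\log N)^m$ and using $(\log N)^m=(\log N)^{\lfloor s\log N/\log\log N\rfloor}\le e^{\frac{s\log N}{\log\log N}\cdot\log\log N}=N^s$, one gets
$$m^m\log N=\Big(\frac{s}{\log\log N}\Big)^m(\log N)^m\log N\le \Big(\frac{s}{\log\log N}\Big)^m N^s\log N.$$
So the claim reduces to $\big(\frac{s}{\log\log N}\big)^m\log N\to 0$. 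Taking logarithms, this amounts to $m\log\log\log N-m\log s>\log\log N$ for $N$ large, which holds because $m\log\log\log N\gtrsim \frac{\log N\log\log\log N}{\log\log N}$ dwarfs $\log\log N$ (indeed $\log N$ grows faster than $(\log\log N)^2$, and $\log\log\log N\to\infty$). This completes the argument.

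There is essentially no obstacle here: all the real content is contained in Theorem~\ref{19} (and, through it, in Theorem~\ref{17}), and the corollary is just a specialization together with an elementary asymptotic estimate. The only minor technical point is handling the integer part of $\frac{s\log N}{\log\log N}$, which as noted does not affect any of the bounds. It is worth emphasizing that the conclusion holds for every fixed $s>0$, so it matches the hypothesis $|\Sc_N|\le N^{1+s}$ of Proposition~\ref{18} and shows that the probabilistic upper bound there cannot be improved using the cardinality of $\Sc_N$ alone.
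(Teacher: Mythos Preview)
Your proposal is correct and follows exactly the paper's approach: specialize Theorem~\ref{19} with $m=\frac{s\log N}{\log\log N}$ and verify $m^m N\log N\ll N^{1+s}$ via the factorization $m^m=(s/\log\log N)^m(\log N)^m$, which is precisely the computation the paper records just before stating the corollary. Your handling of the floor is a bit more careful than the paper's, but the argument is otherwise identical.
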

It is worth comparing the sets in $\Sc_N$ with the collection of tubes in $\Tc_N$. The  construction of $\Sc_N$ in the proof of Theorem \ref{17} is probabilistic, but there is an instructive deterministic construction with only slightly larger cardinality $|\Sc_N|=O(N^{1+s(1+\frac1{\log\log N})})$. Namely, let us further split the sets $S_i$ from the proof of Theorem \ref{19} into $m^2$ sets $S_{i,j}$ having cardinality $N/m$. We may then let $\Cc_i$ consist of all unions of exactly $m$ of the sets $S_{i,j}$.
It is immediate that these $N$-sets cover all $m$-sets in $S_i$.
Letting as before $\Sc_m=\bigcup_{i=1}^{N/m}\Cc_i$ we have $C(\Sc_m)\ge m$. We also have $|\Sc_m|=\frac{N}{m}{m^2\choose m}$. When $m=
\frac{s\log N}{\log\log N}$ we find that  $|\Sc_m|=O(N^{1+s(1+\frac1{\log\log N})})$.

Let us select each $S_{i,j}$ to be a horizontal stack of consecutive squares. Then each tube $T\in\Tc_N$ that fits inside $S_i$ can be essentially assembled (if we allow negligible perturbations of tubes) out of such $S_{i,j}$. This is an easy geometric exercise that uses eccentricity. Thus, $\Sc_m$ will essentially contain all tubes in $\Tc_N$ whose angle with the horizontal axis is  $O(m/N)$. However, $\Sc_m$ will contain other highly disconnected sets, that only resemble tubes at smaller scales.
\medskip

The probabilistic method described earlier is performing poorly for collections of $N$-sets with small cardinality $\lesssim_\epsilon N^{1+\epsilon}$, due to the crude use of the triangle inequality in \eqref{23}. We next present an alternative argument that gives the sharp bound for $C(\Sc)$ when $|\Sc|=O(N)$.
\begin{pr}
\label{prO}	
Assume $K\lesssim N$.	
For each collection $\Sc$ of $N$-sets in $[N^2]$ with $|\Sc|\le KN$ we have
$$C(\Sc)\lesssim K,$$
with a universal implicit constant independent of $K,N,\Sc$.

In particular, we have that $C(\Sc)=O(1)$ if $|\Sc|=O(N)$.
\end{pr}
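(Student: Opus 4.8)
The plan is to abandon the probabilistic method altogether---the union bound in \eqref{23} is blind to the fact that, since $N^2\gg KN$, most elements of $[N^2]$ lie in very few of the sets of $\Sc$---and instead to build the witness $N$-set greedily, in the spirit of Beck--Fiala. We may assume $K\ge 1$, and also $16K<N$, since otherwise $C(\Sc)\le N\le 16K$ trivially. First I would record the abundance of low-degree elements: writing $d(n)=|\{S\in\Sc:\ n\in S\}|$ for $n\in[N^2]$, double counting incidences gives
\[
\sum_{n\in[N^2]}d(n)=\sum_{S\in\Sc}|S|=N|\Sc|\le KN^2,
\]
so by Markov's inequality the set $G=\{n\in[N^2]:\ d(n)\le 4K\}$ of ``light'' elements satisfies $|G|\ge\tfrac34 N^2$.

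Then I would run the following greedy procedure, with threshold $\tau=16K$. Maintain a set $A$, initially empty, and call $S\in\Sc$ \emph{saturated} once $|A\cap S|\ge\tau$. As long as $|A|<N$, choose any new element $n\in G$ lying in no saturated set and adjoin it to $A$. Each step raises $|A\cap S|$ by exactly $1$ for the sets $S\ni n$, all of which are unsaturated at that moment, so no count $|A\cap S|$ ever exceeds $\tau$; in particular, once a set is saturated it has $|A\cap S|=\tau$ and is never touched again. Hence if the procedure reaches $|A|=N$, the $N$-set $S':=A$ satisfies $\max_{S\in\Sc}|S'\cap S|\le\tau=16K\lesssim K$, which is the claim.

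It remains to check that the procedure cannot stall before $|A|=N$. Suppose at some stage $|A|=j<N$ but no admissible element exists, i.e.\ $G\subseteq A\cup\bigcup_{S\ \mathrm{saturated}}S$. Letting $f$ be the number of saturated sets and using $|S|=N$, we would get
\[
\tfrac34 N^2\le|G|\le|A|+\sum_{S\ \mathrm{saturated}}|S|\le j+fN<(f+1)N,
\]
hence $f>\tfrac34 N-1>\tfrac N4$. On the other hand every element of $A$ is light, so
\[
\tau f\le\sum_{S\in\Sc}|A\cap S|=\sum_{n\in A}d(n)\le 4K|A|\le 4KN,
\]
and therefore $f\le 4KN/\tau=N/4$, a contradiction. (This is the only place the precise value of $\tau$ enters, and only to separate $N/4$ from $N/2$; the constant $16$ is not optimized.)

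The sole genuine obstacle is conceptual: one must realize that restricting the greedy choices to \emph{light} elements is essential. If arbitrary elements avoiding saturated sets were allowed, the argument collapses, since a few high-degree elements can saturate many sets at once and ruin the bound $\sum_{n\in A}d(n)\le 4KN$ underpinning the non-stalling estimate. Once this is in place, everything else is routine bookkeeping with absolute constants, and the ``in particular'' clause---$K=O(1)$ forcing $C(\Sc)=O(1)$---is immediate.
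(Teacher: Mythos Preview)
Your proof is correct, and it shares the paper's key opening move: use Markov to isolate a large set of ``light'' elements of degree $O(K)$, and build $S'$ greedily from these. The difference is in the greedy bookkeeping. The paper runs a multi-stage process: each stage selects a maximal subset of the pool meeting every $S\in\Sc$ at most once (by deleting all of $\Ic(n)=\bigcup_{S\ni n}S$ after each pick), yielding $\gtrsim N/K$ elements per stage, so $O(K)$ stages suffice and each $S$ is hit at most once per stage. You instead run a single pass with a saturation threshold $\tau=16K$, avoiding only sets already full, and use a double count to show the process cannot stall. The two arguments are close cousins; yours avoids the stage-by-stage structure and is arguably tidier, while the paper's version makes the bound $|S\cap S'|\le(\text{number of stages})$ immediate without the separate non-stalling argument.
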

\begin{proof}
Note first that
$$|\{n\in[N^2]:\sum_{S\in\Sc}1_S(n)>2K\}|\le \frac{|\Sc|N}{2K}\le \frac{N^2}{2}.$$
Thus the pool
$$\Pc=\{n\in[N^2]:\;\sum_{S\in\Sc}1_S(n)\le 2K\}$$
has cardinality at least $N^2/2$.

We next describe the first stage of a multi-stage selection process for the optimal $S'$. The first stage involves many steps, and the remaining stages are repetitions of the first one.
\\
\\
Step 1. Pick any $n_1\in \Pc$, put $n_1$ in $S'$. Let
$$\Ic(n_1) =\bigcup_{S\in\Sc:\atop{n_1\in S}}S$$
and note that
$$|\Ic(n_1)|\le 2KN.$$
\\
\\
Step 2. If $N^2/2>2KN$, pick any  $n_2\in\Pc\setminus\Ic(n_1)$ and add $n_2$ to $S'$.
\\
\\
Step 3. If $N^2/2>4KN$, pick any $n_3\in\Pc\setminus (\Ic(n_1)\cup \Ic(n_2))$ and add it to $S'$.
\\
\\
We stop when there is no element left to select. At this point we have selected at least $\frac{\frac{N^2}{2}}{2KN}$ elements $n_i$. If we have selected $N$ elements, we stop (at the step when this was achieved) and call $S'$ the set of these $N$ elements.
\\
\\
Otherwise, we call it $S_1'$ and note that
$$\frac{N}{4K}\le |S_1'|<N.$$ We repeat the selection algorithm from the first stage, this time working with the updated pool $\Pc\setminus S_1'$ having cardinality
$$|\Pc\setminus S_1'|\ge \frac{N^2}2-N\ge \frac{N^2}{4}.$$
At the end of this second stage, we produce a set $S_2'$ disjoint from $S_1'$, with cardinality
$$\frac{N}{8K}\le |S_2'|<N.$$
If $|S_1'\cup S_2'|>N$, we stop, and choose $S'$ to be any $N$-set in $S_1'\cup S_2'$. Otherwise, we repeat the algorithm from the first stage for the updated pool with cardinality
$$|\Pc\setminus(S_1'\cup S_2')|\ge \frac{N^2}{2}-2N\ge \frac{N^2}{4}.$$

At the end of this process, we find pairwise disjoint  sets $S_1',\ldots, S_k'$,  with $k\le 8K\ll N$, each of which has size at least $N/8K$. The set $S'$ can be chosen to be any $N$-set in $S_1'\cup\ldots\cup S_k'$.

We note that each $S\in \Sc$ contains at most one $n$ from each $S_i'$. Let us see this for $S_1'$, as the argument for the other $S_i'$ is identical. For $S\in\Sc$, let $n_j$ be the first selected element with $n_j\in S$. Recall that all elements selected later are in fact chosen from a pool that has $\Ic(n_j)$, and thus also $S$ in particular,  removed. This means that $S$ may not contain any $n_{j'}$ with $j'>j$.

Since there are $O(K)$ stages, each $S\in\Sc$ will intersect $S'$ at most $O(K)$ times, showing that $C(\Sc)=O(K)$.

\end{proof}

\section{A few examples for tubes}
\label{s3}

There are many explicit examples that illustrate the fact that $C(\Tc_N)\lesssim \sqrt{N}$. One way to select the set $S'$ in  Definition \ref{de1} is by selecting exactly one $1/N$-square inside each $1/\sqrt{N}$-square. Another one is to pack all the $1/N$-squares inside a fixed $1/\sqrt{N}$-square. Or to place them on a curve having positive curvature. More complicated constructions will arise naturally in the next section.
\smallskip

Here is a self-similar construction that illustrates the upper bound $C(\Tc_N)\lesssim N^\epsilon$. It is easy to see that $C(\Tc_N)\le C(\Tc_{\sqrt{N}})^2$. Indeed, let $S'$ be the optimal configuration of $1/\sqrt{N}$-squares for $C(\Tc_{\sqrt{N}})$. Inside each of  these squares we place the rescaled version of $S'$, consisting of $C(\Tc_{\sqrt{N}})$ many $1/N$-squares. We have a total of $C(\Tc_{\sqrt{N}})^2$ many $1/N$-squares, call them $S''$. Since each $T\in \Tc_N$ sits inside some $T'\in\Tc_{\sqrt{N}}$, the inequality $|S''\cap T|\le C(\Tc_{\sqrt{N}})^2$ follows.

Thus $C(\Tc_{2^{2^{n}}})\le C(\Tc_{2^{2^{m}}})^{2^{n-m}}$ for $n>m$. Fix $\epsilon>0$.
Since we know from Proposition \ref{18} that $C(\Tc_N)\lesssim_\epsilon N^\epsilon$, we may pick $m$ large enough so that $C(\Tc_{2^{2^{m}}})\le 2^{\epsilon2^{m}}$. Let $S'$ be a set that realizes this bound. We rescale it and populate larger squares using the procedure described above. For each $N=2^{2^n}$ with $n>m$ we get a favorable  self-similar set that realizes the bound $C(\Tc_N)\le N^{\epsilon}$.

\section{A failed attempt to disprove the upper bound $C(\Tc_N)\lesssim 1$ for tubes;
	more examples}
\label{sec4}

We reformulate (essentially equivalently) the main question for tubes as follows. Let $\pi$ be a permutation of $\{1,2,\ldots,N\}$. We identify each square with the point $(n/N,\pi(n)/N)$, for some $1\le n\le N$. Call $\Pc$ the collection of all these points. The $O(1)$ neighborhood $T_{\alpha,\beta}$ of the line $y=\alpha x-\beta$ contains the point $(n/N,\pi(n)/N)$ if and only if
$$|\alpha\frac{n}{N}-\frac{\pi(n)}{N}-\beta|\lesssim\frac{\max\{|\alpha|,1\}}{N}.$$
For  $1\le n\not=m\le N$, call
$$c_{n,m}=\max\{1,|\frac{\pi(n)-\pi(m)}{n-m}|\}$$
$$I_{n,m}=[\frac{\pi(n)-\pi(m)}{n-m}-\frac{O(c_{n,m})}{|n-m|},\frac{\pi(n)-\pi(m)}{n-m}+\frac{O(c_{n,m})}{|n-m|}].$$
\begin{lem}We have
$$\sup_{\alpha,\beta\in\R}|T_{\alpha,\beta}\cap \Pc|\sim\max_n\|\sum_{m\not=n}1_{I_{n,m}}\|_{L^\infty}.$$
\end{lem}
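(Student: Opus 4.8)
The plan is to show that both quantities count, up to bounded multiplicative constants, the largest number of points of $\Pc$ that a single tube can capture, by expressing containment of two points in a common tube in terms of the intervals $I_{n,m}$. First I would fix a tube $T_{\alpha,\beta}$ and observe that if $(n/N,\pi(n)/N)$ and $(m/N,\pi(m)/N)$ both lie in $T_{\alpha,\beta}$, then subtracting the two defining inequalities $|\alpha n/N-\pi(n)/N-\beta|\lesssim \max\{|\alpha|,1\}/N$ and the analogous one for $m$ kills the $\beta$ and yields $|\alpha(n-m)/N-(\pi(n)-\pi(m))/N|\lesssim \max\{|\alpha|,1\}/N$, i.e.\ $|\alpha-\frac{\pi(n)-\pi(m)}{n-m}|\lesssim \frac{\max\{|\alpha|,1\}}{|n-m|}$. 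One then has to replace $\max\{|\alpha|,1\}$ by $\max\{1,|\frac{\pi(n)-\pi(m)}{n-m}|\}=c_{n,m}$, which is legitimate because $\alpha$ and $\frac{\pi(n)-\pi(m)}{n-m}$ are within $O(1/|n-m|)\le O(1)$ of each other, so their max's are comparable up to an additive $O(1)$; this is exactly the statement $\alpha\in I_{n,m}$ (after absorbing constants into the $O(\cdot)$ in the definition of $I_{n,m}$). Conversely, if $\alpha\in I_{n,m}$ then one can choose $\beta$ so that the line $y=\alpha x-\beta$ passes close enough to both points that $T_{\alpha,\beta}$ contains them.

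From here the two inequalities follow. For $\gtrsim$: pick $n$ and a value of the level achieving $\max_n\|\sum_{m\ne n}1_{I_{n,m}}\|_{L^\infty}$, say $\alpha^\ast$ lies in $I_{n,m}$ for a set $A$ of indices $m$ with $|A|$ equal to this maximum; then every point indexed by $m\in A$ together with the point indexed by $n$ fits in a common tube with slope $\alpha^\ast$ (choosing $\beta$ to anchor the line at the $n$-th point), so that tube contains $\ge |A|+1$ points of $\Pc$, giving $\sup_{\alpha,\beta}|T_{\alpha,\beta}\cap\Pc|\ge |A|$. For $\lesssim$: given a tube $T_{\alpha,\beta}$ containing a large set $B$ of points of $\Pc$, fix any index $n\in B$; by the forward direction, for every other $m\in B$ we have $\alpha\in I_{n,m}$, hence $\sum_{m\ne n}1_{I_{n,m}}(\alpha)\ge |B|-1$, so $\max_n\|\sum_{m\ne n}1_{I_{n,m}}\|_{L^\infty}\ge |B|-1\gtrsim |T_{\alpha,\beta}\cap\Pc|$.

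The step I expect to be the main (though still mild) obstacle is bookkeeping the implicit constants so that the equivalence ``both points in $T_{\alpha,\beta}$'' $\iff$ ``$\alpha\in I_{n,m}$'' holds cleanly in both directions: the forward direction produces an interval with one absolute constant, while the converse direction (reconstructing $\beta$) needs possibly a larger constant, and one also has to verify that enlarging $\max\{|\alpha|,1\}$ to $c_{n,m}$ only costs another absolute factor. Since the statement is only claimed up to $\sim$, and since the problem explicitly permits ``negligible perturbations of tubes'' and absorbing multiplicative constants, this is handled by taking the $O(c_{n,m})$ in the definition of $I_{n,m}$ with a sufficiently generous implicit constant and noting the comparison $\max\{|\alpha|,1\}\asymp c_{n,m}$ whenever $|\alpha-\frac{\pi(n)-\pi(m)}{n-m}|\le O(1)$. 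The index $n$ over which the maximum is taken is not the same for the two bounds, but that is harmless: the left side of the claimed identity is a single number, and we have bounded it above and below by $\max_n\|\cdots\|_{L^\infty}$ up to constants.
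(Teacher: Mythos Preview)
Your proposal is correct and follows essentially the same approach as the paper: subtract the two containment inequalities to get $|\alpha-\tfrac{\pi(n)-\pi(m)}{n-m}|\lesssim \max\{|\alpha|,1\}/|n-m|$, establish $\max\{|\alpha|,1\}\asymp c_{n,m}$ (the paper does this by observing that for $|n-m|\gg 1$ the bound is $\le\tfrac12\max\{|\alpha|,1\}$, which forces $|\alpha|\sim|\tfrac{\pi(n)-\pi(m)}{n-m}|$ when $|\alpha|\ge 1$), and for the reverse direction anchor $\beta=(n\alpha-\pi(n))/N$ exactly as you describe. Your phrase ``within $O(1/|n-m|)$'' is a slip---the actual bound carries the factor $\max\{|\alpha|,1\}$---but you correctly flag this comparison as the only nontrivial point, and the fix is precisely the one the paper gives.
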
	
\begin{proof}
We prove the double inequality. We start with $\lesssim$. Fix $\alpha,\beta$ and fix some $(n/N,\pi(n)/N)\in T_{\alpha,\beta}\cap \Pc$. Pick any $(m/N,\pi(m)/N)\in T_{\alpha,\beta}\cap \Pc$ with $|m-n|\gg 1$; if no such $m$ exists we are done. It follows that
$$|\alpha(n-m)-(\pi(n)-\pi(m))|\lesssim \max\{|\alpha|,1\},$$
or
$$|\alpha-\frac{\pi(n)-\pi(m)}{n-m}|\lesssim \frac{1}{|n-m|}\max\{|\alpha|,1\}.$$
Let us prove that this implies $\alpha\in I_{n,m}$. It suffices to prove that
$$\max\{|\alpha|,1\}\lesssim \max\{|\frac{\pi(n)-\pi(m)}{n-m}|,1\}.$$
We only need to prove this when $|\alpha|\ge 1$. Recall that we have
$$|\alpha-\frac{\pi(n)-\pi(m)}{n-m}|\le \frac{1}{2}\max\{|\alpha|,1\}.$$
This forces $|\alpha|\sim |\frac{\pi(n)-\pi(m)}{n-m}|$, so we are done.

We next prove $\gtrsim$. Fix $n$ and consider $|m-n|\gg 1$ such that $\alpha\in I_{n,m}$. We have
$$|\alpha-\frac{\pi(n)-\pi(m)}{n-m}|\lesssim\frac{1}{|n-m|}\max\{1,|\frac{\pi(n)-\pi(m)}{n-m}|\},$$
or, letting $\beta=\frac{n\alpha-\pi(n)}{N}$
$$|\alpha\frac{m}{N}-\frac{\pi(m)}{N}-\beta|\lesssim\frac1{N}\max\{1,|\frac{\pi(n)-\pi(m)}{n-m}|\}.$$
Since
$$|\alpha-\frac{\pi(n)-\pi(m)}{n-m}|\le\frac{1}{2}\max\{1,|\frac{\pi(n)-\pi(m)}{n-m}|\},$$
we find as before that
$$\frac1{N}\max\{1,|\frac{\pi(n)-\pi(m)}{n-m}|\}\lesssim \frac1{N}\max\{1,|\alpha|\}.$$
This shows that $(m/N,\pi(m)/N)\in T_{\alpha,\beta}$. The question on whether $C(\Tc_N)=O(1)$ is essentially equivalent to the following one.

\end{proof}
\begin{qu}
\label{9}	
Is there  a permutation $\pi=\pi_N$ with $$\max_{1\le n\le N}\|\sum_{m\not=n}1_{I_{n,m}}\|_{L^\infty}=O(1)?$$	
\end{qu}	
Let us derive some consequences, assuming the answer is yes for such $\pi=\pi_N$. Define
$$A_0(n)=\{m\not=n:\;|\frac{\pi(n)-\pi(m)}{n-m}|\le 1\},$$
and for $1\le j\lesssim \log N$
$$A_j(n)=\{m\not=n:\;|\frac{\pi(n)-\pi(m)}{n-m}|\sim 2^j\}.$$
We will use the inequality
$$\|f\|_1\le |\supp f|\|f\|_{\infty}.$$
For each $n$, the intervals $I_{n,m}$ with $m\in A_0(n)$ lie inside $[-O(1),O(1)]$.
Our assumption then implies that for each $n$ we have
$$\sum_{m\in A_0(n)}|I_{n,m}|\lesssim 1,$$
or
\begin{equation}
\label{1}
\sum_{m\in A_0(n)}\frac{1}{|n-m|}\lesssim 1.
\end{equation}
Similarly, since $I_{n,m}\subset [-O(2^j),O(2^j)]$ for $j\ge 1$, we should also have
\begin{equation}
\label{2}
\sum_{m\in A_j(n)}\frac1{|n-m|}\lesssim 1.
\end{equation}
Let us first digest of difficulty of simultaneously having \eqref{1} and \eqref{2}.
\begin{pr}\label{Lippr}
Assume $\pi$ is Lipschitz
$$\max_{n\not=m}|\frac{\pi(n)-\pi(m)}{n-m}|=O(1).$$Then there exists $n$ and $j$ such that
$$\sum_{m\in A_j(n)}\frac1{|n-m|}\gtrsim \log N.$$
Thus, there is a tube containing at least $\log N$ points.
\end{pr}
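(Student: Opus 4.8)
The plan is to exploit that the Lipschitz hypothesis collapses the naturally $\sim\log N$ many dyadic slope-scales down to $O(1)$ of them, and then pigeonhole. First I would fix the Lipschitz constant $C=O(1)$, so that $|\frac{\pi(n)-\pi(m)}{n-m}|\le C$ for all $n\neq m$. Then, for every $n$ and every $m\neq n$, the slope $|\frac{\pi(n)-\pi(m)}{n-m}|$ is either $\le 1$ (so $m\in A_0(n)$) or lies in $(1,C]$ (so $m\in A_j(n)$ for some $1\le j\le\lceil\log_2 C\rceil$). Hence $\{1,\dots,N\}\setminus\{n\}$ is covered by only $J+1=O(1)$ of the sets $A_j(n)$.

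Next I would take $n=\lfloor N/2\rfloor$ (any $n$ works). Since $\sum_{m\neq n}\frac1{|n-m|}$ contains a partial harmonic sum over at least $N/2$ terms, $\sum_{m\neq n}\frac1{|n-m|}\gtrsim\log N$ for $N$ large. Splitting this sum along the cover above gives
$$\sum_{j=0}^{J}\ \sum_{m\in A_j(n)}\frac1{|n-m|}\ \gtrsim\ \log N ,$$
and as there are only $J+1=O(1)$ summands, pigeonhole produces an index $j$ with $\sum_{m\in A_j(n)}\frac1{|n-m|}\gtrsim\frac{\log N}{J+1}\gtrsim\log N$. This already proves the first assertion; compared with \eqref{2}, it also shows that a Lipschitz $\pi$ cannot answer Question \ref{9} affirmatively.

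To pass to a tube, for this $n$ and $j$ I would set $\lambda=\max\{1,2^j\}$. For $m\in A_j(n)$ one has $c_{n,m}\sim\lambda$, hence $|I_{n,m}|\sim\frac{\lambda}{|n-m|}$, and all these intervals lie inside a common interval of length $O(\lambda)$. Writing $f=\sum_{m\in A_j(n)}1_{I_{n,m}}$ we get $|\supp f|\lesssim\lambda$ and $\|f\|_1=\sum_{m\in A_j(n)}|I_{n,m}|\sim\lambda\sum_{m\in A_j(n)}\frac1{|n-m|}\gtrsim\lambda\log N$, so $\|f\|_1\le|\supp f|\,\|f\|_\infty$ forces $\|f\|_\infty\gtrsim\log N$. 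Thus some $\alpha$ lies in $\gtrsim\log N$ of the intervals $I_{n,m}$, $m\in A_j(n)$; by the $\gtrsim$ direction of the Lemma (with $\beta=\frac{n\alpha-\pi(n)}{N}$), the tube $T_{\alpha,\beta}$ contains $(n/N,\pi(n)/N)$ together with all the corresponding points $(m/N,\pi(m)/N)$, i.e.\ at least $\gtrsim\log N$ points of $\Pc$.

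I do not anticipate a genuine obstacle. The only step requiring a little care is the last one, where the factor $\lambda=\max\{1,2^j\}$ must be tracked through both the individual lengths $|I_{n,m}|$ and the length of the interval containing them, so that it cancels in $\|f\|_1\le|\supp f|\,\|f\|_\infty$. Conceptually the Proposition is just the observation that the Lipschitz condition confines every relevant slope to one of $O(1)$ dyadic bands, so the logarithmic divergence of $\sum 1/|n-m|$ is forced to accumulate inside a single band, contradicting the would-be bound \eqref{2}.
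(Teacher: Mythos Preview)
Your proposal is correct and follows essentially the same approach as the paper: the Lipschitz bound forces only $O(1)$ indices $j$ to contribute, and pigeonholing the harmonic-type sum produces the desired $n,j$. The paper's proof is terser (it sums over all $n\neq m$ to get $\sim N\log N$ and then pigeonholes over both $n$ and $j$), whereas you fix a single $n$ and pigeonhole only over $j$; this is a harmless simplification, and your explicit derivation of the tube via $\|f\|_1\le|\supp f|\,\|f\|_\infty$ matches exactly the mechanism the paper sets up just before the Proposition.
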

\begin{proof}
Only finitely many $j$ may contribute. The result follows since
$$\sum_{1\le n\not=m\le N}\frac{1}{|n-m|}\sim N\log N.$$
\end{proof}

In fact, it is easy to show that for each set $S\subset \{1,2,\ldots,N\}$ with $|S|\sim N$ we have
\begin{equation}
\label{12}
\sum_{n\not=m\in S}\frac1{|n-m|}\sim N\log N.\end{equation}
Thus, since there are $\sim \log N$ values of $j$, any permutation satisfying \eqref{1} and \eqref{2} would have to be very ``uniform".

Let us color the edges of the complete graph $K_N$ with vertices $\{1,2,\ldots,N\}$ using red and blue. We use red for the edge $(n,m)$ between $n$ and $m$ if $m\in\cup_{j\ge 1}A_j(n)$. We use blue otherwise, namely when $m\in A_0(n)$. If \eqref{1} and \eqref{2} hold then
$$\sum_{(n,m)\text{ is blue}}\frac1{|n-m|}\lesssim N$$
$$\sum_{(n,m)\text{ is red}}\frac1{|\pi(n)-\pi(m)|}\lesssim N.$$
If each two-coloring of $K_N$ was forced to contain a monochromatic complete graph $K_M$ with $M\sim N$, this would lead to an immediate contradiction, due to \eqref{12}. This is however far from being true. An old lower bound of Erd\"os on Ramsey numbers produces a two-coloring with the largest monochromatic $K_M$ of order $M\sim \log N$.
\medskip

We next show that  \eqref{1} and \eqref{2} can in fact be realized.

\begin{te}
\label{4.4}	
Let $f:\N\to[0,1]$ satisfy
\begin{equation}
\label{3}
|f(n)-f(m)|\gtrsim \frac1{|n-m|}
\end{equation}
for each $1\le n\not=m$.
Then the function $\pi=\pi_N:\{1,2,\ldots,N\}\to [0,N]$ given by
$$\pi(n)=Nf(n)$$
is a quasi-permutation, in the sense that $|\pi(n)-\pi(m)|\gtrsim 1$ whenever $n\not=m$.

Moreover, it satisfies \eqref{1} and \eqref{2}.	
\end{te}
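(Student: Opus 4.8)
The plan is to verify the three assertions in order: (i) $\pi$ is a quasi-permutation, (ii) \eqref{1} holds, and (iii) \eqref{2} holds. Assertion (i) is immediate: from \eqref{3} we get $|\pi(n)-\pi(m)| = N|f(n)-f(m)| \gtrsim N/|n-m| \ge 1$ since $|n-m| \le N$. So the substance is in \eqref{1} and \eqref{2}.

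For \eqref{1}, recall $A_0(n) = \{m \ne n : |\pi(n)-\pi(m)| \le |n-m|\}$, i.e.\ $N|f(n)-f(m)| \le |n-m|$. Combined with the lower bound \eqref{3}, $|f(n)-f(m)| \gtrsim 1/|n-m|$, the values $f(m)$ for $m \in A_0(n)$ are trapped in an interval of length $\lesssim |n-m|/N$ around $f(n)$ but are also $\gtrsim 1/|n-m|$-separated from $f(n)$ — and more importantly, I would show they are pairwise $\gtrsim 1/(\text{something})$-separated so that only boundedly many can land near any given distance. The clean way: fix $n$ and group $A_0(n)$ by dyadic blocks $B_k = \{m \in A_0(n) : 2^k \le |n-m| < 2^{k+1}\}$. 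For $m \in B_k$ we have $|f(n)-f(m)| \lesssim 2^k/N$, so all such $f(m)$ lie in an interval $J_k$ of length $\lesssim 2^k/N$. But for any two distinct $m, m' \in B_k$, \eqref{3} gives $|f(m)-f(m')| \gtrsim 1/|m-m'| \gtrsim 2^{-k-1}$. Hence $|B_k| \lesssim (2^k/N) \cdot 2^{k+1} = 2^{2k+1}/N$, and also trivially $|B_k| \le 2^{k+1}$ — wait, that bound is too weak; the first bound $|B_k| \lesssim 2^{2k}/N$ is what matters for large $k$. Then $\sum_{m \in A_0(n)} \frac{1}{|n-m|} = \sum_k \sum_{m \in B_k} \frac{1}{|n-m|} \lesssim \sum_k \frac{|B_k|}{2^k} \lesssim \sum_{2^k \le N} \min(1, 2^k/N) \lesssim 1$, where the $\min(1,\cdot)$ combines the trivial bound $|B_k| \le 2^{k+1}$ (giving summand $\le 2$, but summed over $\lesssim \log N$ values that's too much) — so I actually need the sharper count in both ranges. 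Let me instead just use $|B_k| \lesssim 2^{2k}/N$ throughout (valid since $f$ maps into $[0,1]$ of length $1$, but for small $k$ this can exceed $2^{k+1}$, so we cap: $|B_k| \lesssim \min(2^{k+1}, 2^{2k+1}/N)$). The two regimes cross at $2^k \sim N$, so in fact $2^{2k}/N \le 2^k$ always when $2^k \le N$, meaning $|B_k| \lesssim 2^{2k}/N$ is the operative bound, and $\sum_k |B_k|/2^k \lesssim \sum_{2^k \le N} 2^k/N \lesssim 1$. Good.

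For \eqref{2}, fix $j \ge 1$ and $n$, and recall $A_j(n) = \{m : |\pi(n)-\pi(m)| \sim 2^j |n-m|\}$, i.e.\ $N|f(n)-f(m)| \sim 2^j|n-m|$, so $|n-m| \sim N|f(n)-f(m)|/2^j \lesssim N/2^j$ (using $f \in [0,1]$). Thus $A_j(n) \subseteq \{m : |n-m| \lesssim N/2^j\}$, which already forces $|A_j(n)| \lesssim N/2^j$. Moreover, for $m, m' \in A_j(n)$ the reverse Lipschitz bound \eqref{3} restricts how the $f(m)$ can cluster. I would again decompose dyadically in $|n-m|$: for $2^k \le |n-m| < 2^{k+1}$ with $m \in A_j(n)$, we have $|f(n)-f(m)| \sim 2^{j+k}/N$, and \eqref{3} gives pairwise separation $\gtrsim 2^{-k}$ among these $f(m)$-values, which lie in an interval of length $\lesssim 2^{j+k}/N$; hence the block has size $\lesssim 2^{j+2k}/N$, and also is empty unless $2^k \lesssim N/2^j$. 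Summing $1/|n-m| \lesssim 2^{-k}$ over the block and over $k$ gives $\sum_k \min(2^{k+1}, 2^{j+2k}/N)/2^k$, and since the blocks are empty for $2^k \gtrsim N/2^j$, i.e.\ $2^{j+2k}/N \gtrsim 2^k$ never happens in the relevant range, we get $\lesssim \sum_{2^{j+k} \lesssim N} 2^{j+k}/N \lesssim 1$. This establishes \eqref{2}.

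The main obstacle is bookkeeping the two competing upper bounds on each dyadic block — the trivial $|B_k| \le 2^{k+1}$ from the range of $|n-m|$, versus the packing bound $\lesssim 2^{2k}/N$ (resp.\ $\lesssim 2^{j+2k}/N$) from the reverse-Lipschitz separation \eqref{3} — and checking that across all $k$ with $2^k \lesssim N$ (resp.\ $2^{j+k}\lesssim N$) the packing bound is always the smaller one, so that the geometric sum $\sum 2^k/N$ (resp.\ $\sum 2^{j+k}/N$) truncated at its top term is $O(1)$. Once that is organized the rest is routine. I would also double-check that the implicit constants in the definition of $A_j(n)$ and in $I_{n,m}$ interact correctly, but these are absorbed into the $\lesssim$ notation as elsewhere in the paper.
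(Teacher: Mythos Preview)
Your proposal is correct and follows essentially the same approach as the paper: dyadic decomposition in $|n-m|$, a packing bound on each block via the reverse-Lipschitz hypothesis \eqref{3}, and a geometric sum truncated at $2^k\lesssim N$ (resp.\ $2^{k+j}\lesssim N$). The only cosmetic differences are that the paper carries out the separation argument in the $m$-domain (partitioning $[n+2^k,n+2^{k+1}]$ into short intervals, each containing at most one admissible $m$) rather than in the $f$-range as you do, and for \eqref{1} the paper introduces an extra dyadic parameter $2^l\sim|\pi(n)-\pi(m)|$ that your argument avoids; neither difference is substantive.
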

The fact that $\pi$ does not take integer values is irrelevant for our line of investigation. The requirement \eqref{3} seems very strong, forcing the bounded $f$ to be very ``structured". One example of such $f$ we are aware of is
$$f(n)=\{n\theta\},$$
where $\{x\}=x-\lfloor x\rfloor$ is the fractional part of $x$, and $\theta$ is any badly approximable number. That means
$$\inf_{n}n\;\dist(n\theta,\Z)>0.$$
Examples of such $\theta$ include all quadratic irrationals. To check \eqref{3} note that
$$|f(n)-f(m)|=|(n-m)\theta-(\lfloor n\theta\rfloor-\lfloor m\theta\rfloor)|\gtrsim\frac1{|n-m|}.$$	
\begin{proof}(of Theorem \ref{4.4})
Let us first check \eqref{2}. For $k\ge 0$ define
$$A_n(j,k)=\{m\in A_n(j):\;|n-m|\sim 2^k\}.$$
We collect a few observations. First, assume $A_n(j,k)$ is nonempty. Then, picking some $m$ in it implies that
$$\frac{N}{|m-n|}\lesssim |\pi(n)-\pi(m)|\sim |n-m|2^j,$$
so
\begin{equation}
\label{4}
N\lesssim 2^{2k+j}.
\end{equation}
Second, since $N\gtrsim |\pi(n)-\pi(m)|\sim |n-m|2^j$, we also have
\begin{equation}
\label{5}
N\gtrsim 2^{k+j}.
\end{equation}

For the third observation assume $m\in A_n(j,k)$ satisfies, say,  $m\in J:=[n+2^k,n+2^{k+1}]$. Let $\gamma=\frac{2^{j+k}}{N}$. We know that $\gamma\lesssim 1$.
 We split $J$ into roughly  $\sim 2^k\gamma$ intervals $I$ with length $\ll 1/\gamma$,  a bit smaller than $1/\gamma$. From \eqref{4} we know that there is at least one such interval. We claim that for each such $I$ we have
 \begin{equation}
 \label{6}
 |I\cap A_n(j,k)|\le 1,
 \end{equation}
 so that
 \begin{equation}
 \label{7}
 |A_n(j,k)|\le \frac{2^{2k+j}}{N}.
 \end{equation}
Indeed, assume for contradiction that the intersection contains two distinct $m_1,m_2$. Then
$$|\pi(m_1)-\pi(m_2)|\gtrsim \frac{N}{|m_1-m_2|}\gg N\gamma=2^{j+k}.$$
This however contradicts the fact that $|\pi(m_1)-\pi(n)|,|\pi(m_2)-\pi(n)|\sim 2^{j+k}$, proving that  \eqref{6} must hold.

The verification of \eqref{2} is now immediate as follows
$$\sum_{m\in A_n(j)}\frac1{|n-m|}=\sum_{1\le 2^k\lesssim N2^{-j}}\sum_{m\in A_n(j,k)}\frac1{2^k}\lesssim \sum_{1\le 2^k\lesssim N2^{-j}}\frac{2^{k+j}}{N}\lesssim 1.$$

The argument for \eqref{1} is very similar. For $1\le 2^k,2^l\le N$ we let
$$B_n(k,l)=\{m:\;|m-n|\sim 2^k,\;|\pi(m)-\pi(n)|\sim 2^l\}.$$
Reasoning as before we find that $|B_{n}(k,l)|\lesssim 2^k/\Delta$, where $\Delta=N/2^l\lesssim 2^k$. Thus we have
$|B_n(k,l)|\lesssim 2^{k+l}/N$ and
$$\sum_{m\in A_n(0)}\frac1{|n-m|}\lesssim \sum_{2^k\le N}\sum_{l\le k}\frac1{2^k}|B_n(k,l)|\lesssim \sum_{2^k\le N}\frac{2^k}{N}\lesssim 1.$$

\end{proof}	
Unfortunately, the example $\pi(n)=N\{n\theta\}$ is very bad for Question \ref{9}, for all irrational $\theta\in[0,1]$. By Diriclet's Theorem we may pick $|p|\le \sqrt{N}$ such that $\{p\theta\}\le 1/\sqrt{N}$. It follows that $\pi(pl)=l\pi(p)$ for $l\le \sqrt{N}$. The points $(pl,\pi(pl))$ are in fact collinear, showing that
$$\max_{1\le n\le N}\|\sum_{m\not=n}1_{I_{n,m}}\|_{L^\infty}\gtrsim \sqrt{N}.$$
There is another class of permutations that perhaps deserve some further exploration. It is defined as follows. Write each $1\le n\le N=2^t$ in base 2
$$n=a_0+a_12+\ldots+a_{t-1}2^{t-1},\;a_i\in\{0,1\}.$$
Define
$$\pi(n)=a_{t-1}+a_{t-2}2+\ldots+a_02^{t-1}.$$
This permutation almost satisfies (in a way we will not try to quantify precisely) $$|\pi(n)-\pi(m)||n-m|\gtrsim N.$$ Unfortunately, this is as bad for Question \ref{9} as the previous example. All base-two palindromes (numbers $n$ whose ordered sequence of digits in base 2 is the same when read  backwards) satisfy $\pi(n)=n$. As there are roughly $\sqrt{N}$ palindromes, we get the same lower bound as before. It would be interesting to explore other similar examples, such as the random scrambling of the digits, as opposed to reversing the digits.

\section{A lower bound for $\max_{T\in \Tc_N}|T\cap S'|$ when $S'$ is an ``AD-regular" set}
\label{sec5}

In this section we generalize the argument from Proposition \ref{Lippr} to reach a stronger conclusion.
\begin{pr}\label{ADregcounter}
    Assume $S'$ is a collection consisting of $N$ squares of side length $\frac{1}{N}$ inside $[0, 1]^2$ that satisfies the following condition: the $r$-neighborhood of these squares can be covered by $\lesssim \frac{1}{r}$ balls of radius $r$, $\forall r \in [\frac{1}{N}, 1]$. Then $\max_{T\in \Tc_N}|T\cap S'| \gtrsim \log N$.
\end{pr}
In particular, the above condition is satisfied if $S'$ is ``AD-regular'' in the following sense: the $r$-neighborhood of every square in $S'$ contains $\sim rN$ squares in $S'$, $\forall r \in [\frac{1}{N}, 1]$. Thus, Proposition \ref{ADregcounter} and the fact that $C(\Tc_N)\lesssim \frac{\log N}{\log\log N} $ tells us that AD-regular sets  are not extremizers for $C(\Tc_N)$. Such an extremizer $S'$ should be more ``spread out''. Perhaps a good candidate to consider is a set with exactly one square inside each $\frac{1}{\sqrt{N}}$-square, as the energy \eqref{avgsum} in the proof below is $\sim N^2$ in that case. These considerations also apply to the $\S^2$ version of the problem.

\begin{proof}[Proof of Proposition \ref{ADregcounter}]
    For $q_1, q_2 \in S'$, define  $\text{dist} (q_1, q_2)$ to be the distance between their centers. Note that for each pair $(q_1, q_2)$, the number of tubes in $\Tc_N$ containing both of them is $\sim \frac{1}{\text{dist} (q_1, q_2)}$.

    For each $q_1 \in S'$, define $$R(q_1) := \sum_{q_2\neq q_1 \in S'} \# \{\text{tubes in } \Tc_N \text{ containing both } q_1 \text{ and } q_2\} \sim \sum_{q_2\neq q_1 \in S'} \frac{1}{\text{dist} (q_1, q_2)}.$$
    We now consider the sum
    \begin{equation}\label{avgsum}
        \sum_{q_1 \in S'} R(q_1) \sim  \sum_{q_1\neq q_2, q_1, q_2 \in S'} \frac{1}{\text{dist} (q_1, q_2)}.
    \end{equation}
    By our hypothesis, for each dyadic number $2^j \in [\frac{1}{N}, 1]$, $S'$ can be covered by $\lesssim 2^{-j}$ balls of radius $2^j$. By Cauchy--Schwarz, the number of $(q_1, q_2) \in S' \times S'$ such that $\text{dist} (q_1, q_2) \leq 2^j$ is $\gtrsim (2^j N)^2 \cdot 2^{-j} = 2^j N^2.$ Using this in \eqref{avgsum} we obtain $\sum_{q_1 \in S'} R(q_1) \gtrsim N^2 \log N$. Hence we can find some $q_1 \in S'$ such that $R(q_1) \gtrsim N \log N$. Unpacking  the definition of $R(q_1)$, we conclude that there is a tube in $\Tc_N$ that contains $q_1$ and $\gtrsim \log N$ other squares in $S'$.

\end{proof}

\end{document}